\begin{document}

\begin{frontmatter}



\title{
The core of the games with fractional linear utility functions
}

\author{Monica Patriche}

\address{
University of Bucharest
Faculty of Mathematics and Computer Science
    
14 Academiei Street
   
 010014 Bucharest, 
Romania
    
monica.patriche@yahoo.com }

\begin{abstract}
    We consider fractional linear programming production games for the single-objective
 and multiobjective cases. We use the method of Chakraborty and Gupta (2002) in order
 to transform the fractional linear programming problems into linear programming problems. 
A cooperative game is attached and we prove the non-emptiness of the core by using the 
duality theory from the linear programming. In the multiobjective case, we give a characterization 
of the Stable outcome of the associate cooperative game, which is balanced. We also consider 
the cooperative game associated to an exchange economy with a finite number of agents.
    \end{abstract}

\begin{keyword}
fractional linear programming problem, \
cooperative game, \
core, \
exchange economy.\


\end{keyword}

\end{frontmatter}



\label{}





\bibliographystyle{elsarticle-num}
\bibliography{<your-bib-database>}







\section{INTRODUCTION}

Owen considered the linear programming production problem with $n$ producers
who have $m$ resources and cooperate in order to produce $p$ goods. The
producers' aim is the maximization of their income, which is modeled as the
objective function of the discussed problem. A cooperative game is attached
and the fair allocation of the income is put into question. Methods from
duality theory are related to methods from cooperative games in order to
prove that the core is nonempty and to find its elements. The condition of
the nonemptiness of the core for a cooperative game is the balancedness, as
it was proved in [bon, shapley]

The seminal work of Owen has many extensions. Samet and Zemel (1984) studied
the relation between the core of a given LP-game and the set of payoff
vectors generated by optimal dual solutions to the corresponding linear
program. Granot (1984) generalized the Owen's model so that the resources
held by any subset of producers S is not restricted to be the vector sum of
the resources held by the members of S. He also proved the non-emptiness of
the core of the associated game. Curiel, Derks and Tijs (1989) considered
linear production games with committee control. Gellekom, Potters,
Reijnierse, Engel and Tijs (2001) also studied linear production processes.
Nishizaki and Sakawa (1999, 2001) treated the multiobjective case.

In this paper, we consider that the producers want to maximize the average
income on unit time, which is modeled by a fractional linear objective
function. We generalize the Owen's model by introducing the fractional
linear programming production games for the single-objective and
multiobjective cases. The transformation of the fractional linear
programming problems into linear programming problems is made by using the
method of Chakraborty and Gupta (2002). We attach a cooperative game and we
prove the non-emptiness of its core. The multiobjective game is balanced,
but not superaditive. For this case, we give a characterization of the
Stable outcome of the associate cooperative game. Finally, we consider the
cooperative game associated to an exchange economy with a finite number of
agents.

The paper is organized in the following way: fractional linear programming
production game is presented in Section 2, the multiobjective model is
studied in Section 3 and the fractional linear programming production game
with fuzzy parameters is the containt of Section 4.\bigskip

\section{Fractional linear programming production games}

\subsection{The model}

A colection $\mathcal{B}$ of coalitions is said to be balanced if there
exists $\gamma (S)>0$ for each $S\in \mathcal{B}$ such that, for each $i\in
N,$ $\tsum_{\substack{ S\in \mathcal{B}  \\ i\in S}}\gamma (S)=1.$ The
cooperative game $(N,V)$ is called balanced if for each balanced colection $%
\mathcal{B}$, $\tsum_{S\in \mathcal{B}}\gamma (S)V(S)\leq V(N).$

We consider here the following model of fractional linear production game.
There are $m$ types of resurces used for the production of $p$ goods. For
each $i\in N,$ the player $i$ is endowed with a vector $b^{i}$ of resources,
where $b^{i}=(b_{1}^{i},b_{2}^{i},...,b_{m}^{i})$. Any coalition $S$ will
use a total of $b_{k}(S)=\tsum_{i\in S}b_{k}^{i}$ units of the $k$th
resource. We assume that a unit of the $j$th good ($j=1,...,p$) requires $%
a_{kj}$ units of the $k$ th resource $(k=1,...,m)$. A coalition $S$ uses all
its resources in order to produce a vector $(x_{1},x_{2},...,x_{p})$ of
goods which satisfies\medskip

$a_{11}x_{1}+a_{12}x_{2}+...+a_{1p}x_{p}\leq b_{1}(S)$

$a_{21}x_{1}+a_{22}x_{2}+...+a_{2p}x_{p}\leq b_{2}(S)$

................................................ \ \ \ \ \ \ \ \ \ \ \ \ \ \
\ \ \ \ \ \ \ \ \ \ \ \ \ \ \ \ (1)

$a_{m1}x_{1}+a_{m2}x_{2}+...+a_{mp}x_{p}\leq b_{m}(S)$

$x_{1},x_{2},...,x_{p}\geq 0.\medskip $

The players of the coalition $S$ want to maximize the function $\frac{N(x)}{%
D(x)}=\frac{c_{1}x_{1}+c_{2}x_{2}+...+c_{p}x_{p}+c_{0}}{%
d_{1}x_{1}+d_{2}x_{2}+...+d_{p}x_{p}+d_{0}},$ which means the average income
on unit time as in Tigan [].

We will denote by $%
x=(x_{1},x_{2},...,x_{p})^{T},c=(c_{1},c_{2},...,c_{p})^{T},d=(d_{1},d_{2},...,d_{p})^{T}\in 
\mathbb{R}^{p},$ $A=(a_{ij})_{\substack{ i=\overline{1,m}  \\ j=\overline{1,p%
} }}\in \mathbb{R}^{m\times p},$ $b(S)=(b_{1}(S),b_{2}(S),...,b_{p}(S))^{T}%
\in \mathbb{R}^{m},$ $D_{S}=\{x\in R^{p}:Ax\leq b(S),$ $x\geq 0\}\subseteq 
\mathbb{R}^{p}.$ \bigskip

After we make the substitution (Charnes and Cooper) $y=tx,$ $t=\frac{1}{%
dx+d_{0}},$ the problem $(1)$ becomes equivalent with $(2)$, where\medskip

Max $tN(\frac{y}{t})$

$tD(\frac{y}{t})\leq 1$ \ \ \ \ \ \ \ \ \ \ \ \ \ \ \ \ \ \ \ \ \ \ \ \ \ \
\ \ \ \ \ \ \ \ \ \ \ \ \ \ \ \ \ \ \ \ (2)

$A(\frac{y}{t})-b(S)\leq 0$

$t>0,y\geq 0.\medskip $

Assume that $x\in \Delta :=\{x:Ax\leq b,$ $x\geq 0\}$ implies $D(x)>0.$

The problem $(2)$ can be reduced to problem $(3):$\medskip

$P(S)$\bigskip

Max $cy+c_{0}t$

$dy+d_{0}t\leq 1$

$Ay-tb(S)\leq 0$ \ \ \ \ \ \ \ \ \ \ \ \ \ \ \ \ \ \ \ \ \ \ \ \ \ \ \ \ \ \
\ \ \ \ \ \ \ \ \ \ \ \ \ \ \ \ \ \ \ (3)

$t>0,y\geq 0.$

We present here several results conserning the relation between the above
problems.

\begin{theorem}
(Schaible [,]). Let for some $\xi \in \Delta ,$ $N(\xi )\geq 0,$ if (1)
reaches a (global) maximum at $x=x^{\ast },$ then (3) reaches a (global)
maximum at a point $(t,y)=(t^{\ast },y^{\ast }),$ where $\frac{y^{\ast }}{%
t^{\ast }}=x^{\ast }$ and the objective functions at these points are
equal.\medskip
\end{theorem}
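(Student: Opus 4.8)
The plan is to prove the statement directly, by transporting feasible points and objective values between problems $(1)$ and $(3)$ along the Charnes--Cooper substitution, and then using the hypothesis $N(\xi)\ge 0$ only at the very end to control a sign. First I would verify the forward transfer of feasibility. Since $b(S)\le b$, the feasible set $D_{S}$ of $(1)$ is contained in $\Delta$, so every $x$ feasible for $(1)$ satisfies $D(x)>0$ by the standing assumption. Hence for such an $x$ the point $t=1/D(x)>0$, $y=tx\ge 0$ is well defined, and $dy+d_{0}t=t\,D(x)=1\le 1$, $Ay-tb(S)=t(Ax-b(S))\le 0$, so $(t,y)$ is feasible for $(3)$; moreover $cy+c_{0}t=t\,N(x)=N(x)/D(x)$. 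Applying this to the given maximizer $x^{\ast}$ yields a feasible point $(t^{\ast},y^{\ast})$ of $(3)$ with $t^{\ast}=1/D(x^{\ast})$, $y^{\ast}/t^{\ast}=x^{\ast}$, and objective value $v^{\ast}:=N(x^{\ast})/D(x^{\ast})$.

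Next I would set up the reverse transfer, which is what produces the matching upper bound for $(3)$. Let $(t,y)$ be any feasible point of $(3)$; since the constraint forces $t>0$, put $x:=y/t\ge 0$. Then $Ax=Ay/t\le b(S)$, so $x\in D_{S}\subseteq\Delta$, whence $D(x)>0$; the constraint $dy+d_{0}t\le 1$ becomes $t\,D(x)\le 1$, i.e.\ $t\le 1/D(x)$, and the objective value of $(t,y)$ is $cy+c_{0}t=t\,N(x)$.

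The conclusion then follows from a short case analysis. By hypothesis there is a feasible $\xi$ with $N(\xi)\ge 0$, so $v^{\ast}\ge N(\xi)/D(\xi)\ge 0$, because $x^{\ast}$ is optimal for $(1)$ and $D(\xi)>0$. Now take an arbitrary feasible $(t,y)$ of $(3)$ and the associated $x=y/t$. If $N(x)\ge 0$, multiply $t\le 1/D(x)$ by $N(x)$ to get $t\,N(x)\le N(x)/D(x)\le v^{\ast}$, the last inequality because $x$ is feasible for $(1)$. If $N(x)<0$, then $t\,N(x)<0\le v^{\ast}$ since $t>0$. In both cases $cy+c_{0}t\le v^{\ast}$. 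Since $(t^{\ast},y^{\ast})$ is feasible for $(3)$ and attains exactly $v^{\ast}$, it is a global maximizer of $(3)$, the optimal values of $(1)$ and $(3)$ coincide, and $y^{\ast}/t^{\ast}=x^{\ast}$.

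I expect the only genuinely delicate point to be the case $N(x)<0$ in the last step: there the inequality $t\le 1/D(x)$ would push $t\,N(x)$ in the unhelpful direction, and it is precisely the hypothesis $N(\xi)\ge 0$ — which forces $v^{\ast}\ge 0$ — that rescues the estimate, a negative number being automatically $\le v^{\ast}$. A minor technical caveat is that the feasible set of $(3)$ is not closed, owing to the strict inequality $t>0$, so one should not attempt to argue existence of the maximizer by compactness; the argument above sidesteps this by exhibiting $(t^{\ast},y^{\ast})$ explicitly. One should also read ``$\xi\in\Delta$'' in the statement as ``$\xi$ feasible for $(1)$'', i.e.\ $\xi\in D_{S}$, which is the version actually used.
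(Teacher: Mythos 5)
Your proof is correct. Note first that the paper itself offers no proof of this statement: it is quoted as a known result of Schaible (and of Charnes--Cooper type), so there is no argument in the text to compare yours against. What you give is the standard equivalence argument for the Charnes--Cooper transformation, carried out carefully: the forward map $x\mapsto(t,y)=(1/D(x),\,x/D(x))$ sends feasible points of $(1)$ to feasible points of $(3)$ preserving the objective value, the reverse map $(t,y)\mapsto y/t$ gives $t\,N(y/t)\le N(y/t)/D(y/t)$ only when $N(y/t)\ge 0$, and the hypothesis $N(\xi)\ge 0$ enters precisely to force the optimal value $v^{\ast}\ge 0$ so that the case $N(y/t)<0$ is harmless. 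You are also right to flag the two reading issues: $\xi$ must be taken feasible for $(1)$ (i.e.\ in $D_S$, not merely in $\Delta$) for the bound $v^{\ast}\ge N(\xi)/D(\xi)\ge 0$ to be available, and your counterexample-style remark about why the hypothesis cannot be dropped (the supremum of $(3)$ can be $0$ and unattained when $v^{\ast}<0$) is accurate. The only implicit assumption worth making explicit is $b(S)\le b$, i.e.\ $D_S\subseteq\Delta$, which you need so that the standing positivity assumption on $D$ applies on the feasible set of $(1)$; this holds when the endowments $b^i$ are nonnegative and $b=b(N)$, but it deserves a sentence.
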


\begin{theorem}
(Schaible []) If (1) reaches a (global) maximum at a point $x^{\ast },$ then
the corresponding transformed problem (3) attains the same maximum value at
a point $(t^{\ast },y^{\ast })$ where $x^{\ast }=\frac{y^{\ast }}{t^{\ast }}%
. $ Moreover (3) has a concave objective function and a convex feasible set.
\end{theorem}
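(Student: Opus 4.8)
The plan is to carry out the Charnes--Cooper substitution $t=1/(dx+d_{0})$, $y=tx$ explicitly and to verify the two inequalities between the optimal values of (1) and (3); once the values are shown equal and the optimizers are matched, the ``moreover'' part is a one-line structural observation.

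First I would prove the direction ``optimal value of (3) $\ge$ optimal value of (1)''. Assume (1) attains its maximum at $x^{\ast}\in D_{S}$, and set $t^{\ast}=1/D(x^{\ast})$ and $y^{\ast}=t^{\ast}x^{\ast}$. Positivity $t^{\ast}>0$ uses the standing assumption that $D>0$ on $\Delta$, and clearly $y^{\ast}\ge 0$. A direct substitution gives $dy^{\ast}+d_{0}t^{\ast}=t^{\ast}D(x^{\ast})=1$ and $Ay^{\ast}-t^{\ast}b(S)=t^{\ast}\bigl(Ax^{\ast}-b(S)\bigr)\le 0$, so $(t^{\ast},y^{\ast})$ is feasible for (3); its objective value is $cy^{\ast}+c_{0}t^{\ast}=t^{\ast}N(x^{\ast})=N(x^{\ast})/D(x^{\ast})$, i.e.\ the optimal value of (1).

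Next I would prove ``optimal value of (3) $\le$ optimal value of (1)''. Take any $(t,y)$ feasible for (3), so $t>0$, $y\ge 0$, $dy+d_{0}t\le 1$ and $Ay-tb(S)\le 0$, and put $x=y/t$. Then $x\ge 0$ and $Ax-b(S)=\frac{1}{t}\bigl(Ay-tb(S)\bigr)\le 0$, so $x$ is feasible for (1) and $D(x)>0$. From $dy+d_{0}t\le 1$ we get $tD(x)\le 1$, hence $t\le 1/D(x)$, while the objective value at $(t,y)$ equals $cy+c_{0}t=tN(x)$. If $N(x)\le 0$ this is $\le 0\le N(x^{\ast})/D(x^{\ast})$ (using, as in the previous theorem, that the optimal value of (1) is nonnegative); if $N(x)>0$, then $tN(x)\le N(x)/D(x)\le N(x^{\ast})/D(x^{\ast})$ by optimality of $x^{\ast}$. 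Either way the two optimal values coincide, and $(t^{\ast},y^{\ast})$ with $x^{\ast}=y^{\ast}/t^{\ast}$ is an optimal point of (3).

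Finally, the structural claim is immediate: the objective $cy+c_{0}t$ of (3) is an affine function of $(t,y)$, hence concave, and the feasible region is the intersection of the finitely many closed half-spaces $\{dy+d_{0}t\le 1\}$, the rows of $\{Ay-tb(S)\le 0\}$ and $\{y\ge 0\}$ with the open half-space $\{t>0\}$, hence convex. I expect the only delicate point to be the reverse inequality when $N$ may take negative values on the feasible set of (1): that is exactly where the sign hypothesis on $N$ (inherited from the Schaible setting, together with $D>0$ on $\Delta$) is used, and in its absence one would replace $t>0$ by $t\ge 0$ in (3) and argue by a limiting/closure argument — the usual mild technicality of the Charnes--Cooper reduction.
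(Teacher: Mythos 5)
Your argument is correct and complete; note that the paper itself offers no proof of this statement (it is quoted from Schaible as a known result), so there is no internal argument to compare against. Your two-inequality verification is the standard way to establish the Charnes--Cooper equivalence: the forward substitution $t^{\ast}=1/D(x^{\ast})$, $y^{\ast}=t^{\ast}x^{\ast}$ is checked correctly, the reverse map $x=y/t$ is handled correctly, and the ``moreover'' part is indeed the one-line observation you give (affine objective, intersection of half-spaces). The only point that deserves emphasis is the one you yourself flag: because the paper writes the transformed constraint as $dy+d_{0}t\le 1$ rather than $dy+d_{0}t=1$, the feasible set of (3) is closed under the scaling $(t,y)\mapsto(\lambda t,\lambda y)$ for $\lambda\in(0,1]$, so if $N$ were strictly negative on all of $D_{S}$ the supremum of (3) would be $0$ (approached as $\lambda\to 0^{+}$ but not attained) while the optimum of (1) would be strictly negative, and the theorem as literally stated would fail. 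Hence the hypothesis from Theorem~1 --- that $N(\xi)\ge 0$ for some feasible $\xi$, which together with $D>0$ on $\Delta$ forces the optimal value of (1) to be nonnegative --- must be carried over to this theorem; you invoke it at exactly the right place, in the case $N(x)\le 0$ of the reverse inequality. With that hypothesis made explicit, your proof stands as written.
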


\subsection{The associated cooperative game}

We associate to the problem above the cooperative game $(N,V),$ where $N$ is
the set of players and $V:\mathcal{P}(N)\rightarrow \mathbb{R},$ where $V$
is defined by\medskip

$V(S)=c_{1}y_{1}+c_{2}y_{2}+...+c_{p}y_{p}+c_{0}t$ if $S\subsetneq N,$ where 
$y$ is an optimal solution to problem $P(S)$\medskip

and\medskip

$V(N)=\gamma (c_{1}y_{1}+c_{2}y_{2}+...+c_{p}y_{p}+c_{0}t),$ where $y$ is an
optimal solution to problem $P(N)$

and $\gamma >\max (\gamma ^{\ast },n)$ and $\gamma ^{\ast }=\max_{\mathcal{B}%
}\tsum_{S\in \mathcal{B}}\gamma (S),$ $\mathcal{B}$ being any balanced
coalition of $N.$\medskip

We prove first the nonemptiness of the core of the game $(N,V).$ This fact
is a consequence of fact that the game is balanced.

\begin{theorem}
The game $(N,V)$ is balanced.
\end{theorem}

\begin{proof}
Let $\mathcal{B}$ be a balanced colection of $N.$ First, we have that

$\tsum_{S\in \mathcal{B}}\gamma (S)b_{k}(S)=b_{k}(N)$ for each $k\in
\{1,2,...,m\},$ and then,

$\tsum_{S\in \mathcal{B}}\gamma (S)v(S)=$

$=\tsum_{S\in \mathcal{B}}\gamma
(S)(c_{1}y_{1}(S)+c_{2}y_{2}(S)+...+c_{p}y_{p}(S)+c_{0}t(S))=$

$=\tsum_{j=1}^{p}c_{j}(\tsum_{S\in \mathcal{B}}\gamma
(S)y_{j}(S))+c_{0}\tsum_{S\in \mathcal{B}}\gamma (S)t(S)=$

$=\gamma (\tsum_{j=1}^{p}c_{j}\widehat{y}_{j}+c_{0}\widehat{t}),$ where

$\gamma ^{^{\prime }}=\tsum_{S\in \mathcal{B}}V(S),$ $\widehat{y}%
_{j}:=\tsum_{S\in \mathcal{B}}\frac{\gamma (S)}{\gamma ^{\prime }}y_{j}(S)$
and $\widehat{t}=\tsum_{S\in \mathcal{B}}\frac{\gamma (S)}{\gamma ^{\prime }}%
t(S).$

Assume that $(y(S),t(S))\in D_{S}.$

Since $Ay(S)\leq t(S)b(S)$ for each $S\in \mathcal{B}$ and $\frac{\gamma (S)%
}{\gamma ^{\prime }}\geq 0,$ it follows that

$A\frac{\gamma (S)}{\gamma ^{\prime }}y(s)\leq t(S)\frac{\gamma (S)}{\gamma
^{\prime }}b(S)$. By adding, we obtain $A(\tsum_{S\in \mathcal{B}}\frac{%
\gamma (S)}{\gamma ^{\prime }}y(s))\leq (\tsum_{S\in \mathcal{B}}\frac{%
\gamma (S)}{\gamma ^{\prime }}b(S))t(S),$ and then $A(\tsum_{S\in \mathcal{B}%
}\frac{\gamma (S)}{\gamma ^{\prime }}y(s))\leq b(N)(\tsum_{S\in \mathcal{B}}%
\frac{\gamma (S)}{\gamma ^{\prime }}t(S)).$ Therefore, $A\widehat{y}\leq b(N)%
\widehat{t}$, that is $(\widehat{y},\widehat{t})$ verifies $Ay-tb(N)\leq 0$

Since $dy(S)+d_{0}t(S)\leq 1$ for each $S\in \mathcal{B}$ and $\frac{\gamma
(S)}{\gamma ^{\prime }}\geq 0,$ it follows that $d\tsum_{S\in \mathcal{B}}%
\frac{\gamma (S)}{\gamma ^{\prime }}y(S)+d_{0}\tsum_{S\in \mathcal{B}}\frac{%
\gamma (S)}{\gamma ^{\prime }}t(S)\leq 1$ and then, $d\widehat{y}+d_{0}%
\widehat{t}\leq 1.$

We notice that $\widehat{y}_{j},\widehat{t}\geq 0$ and conclude that $(%
\widehat{y},\widehat{t})$ is a feasible solution for the linear problem
associated to the coalition $N$ and $V(N)\geq \gamma ^{\prime }(c_{1}%
\widehat{y}_{1}+c_{2}\widehat{y}_{2}+...+c_{p}\widehat{y}_{p}+c_{0}\widehat{t%
}).$
\end{proof}

\begin{corollary}
The core of the game $(N,V)$ is nonempty.\medskip
\end{corollary}

Now, we find the elements of the core for the game $(N,V).$

We denote $c^{\prime }=(c,c_{0}),$ $y^{\prime }=(y,t)\in \mathbb{R}%
^{p^{+1}}, $ $b^{\prime }(S)=(d_{0},-b(S))\in R^{m+1}A^{\prime }(S)=%
\begin{pmatrix}
d & d_{0} \\ 
A & -b(S)%
\end{pmatrix}%
\in \mathbb{R}^{(m+1)\times (p+1)}.\medskip $

We write the primal problem (3) as\medskip

$P(S):$ \ \ \ \ \ \ \ Max$c^{\prime }y^{\prime }$

$\ \ \ \ \ \ \ \ \ \ \ \ \ \ \ \ A^{\prime }(S)y^{\prime }\leq
(1,0,..,0,0)^{T}$

$\ \ \ \ \ \ \ \ \ \ \ \ \ \ \ \ y^{\prime }\geq 0.$

\bigskip

The dual of $P(S)$ is the problem $D(S),$ where\medskip

$D(S):$ \ \ \ \ \ \ \ Min$\omega _{1}$

$\ \ \ \ \ \ \ \ \ \ \ \ \ \ \ \ (A^{\prime }(S))^{T}\omega \geq c^{\prime }$

$\ \ \ \ \ \ \ \ \ \ \ \ \ \ \ \ \omega _{i}\geq 0$ for each $%
i=1,...,m+1.\medskip $

Explicitely, for each $S\subsetneq N,$ $D(S)$ is\medskip

$D(S):$ \ \ \ \ \ \ \ Min$\omega _{1}$

$\ \ \ \ \ \ \ \ \ \ \ \ \ \ \ \ d\omega _{1}+A^{T}(\omega _{2},\omega
_{3},...,\omega _{m+1})^{T}\geq c$

\ \ \ \ \ \ \ \ \ \ \ \ \ \ \ \ $d_{0}\omega _{1}-b(S)(\omega _{2},\omega
_{3},...,\omega _{m+1})^{T}\geq c_{0}$

$\ \ \ \ \ \ \ \ \ \ \ \ \ \ \ \ \omega _{i}\geq 0$ for each $%
i=1,...,m+1.\medskip $

and $D(N)$ is Min$\omega _{1}$

$\ \ \ \ \ \ \ \ \ \ \ \ \ \ \ \ d\omega _{1}+A^{T}(\omega _{2},\omega
_{3},...,\omega _{m+1})^{T}\geq \gamma c$

\ \ \ \ \ \ \ \ \ \ \ \ \ \ \ \ $d_{0}\omega _{1}-b(S)(\omega _{2},\omega
_{3},...,\omega _{m+1})^{T}\geq \gamma c_{0}$

$\ \ \ \ \ \ \ \ \ \ \ \ \ \ \ \ \omega _{i}\geq 0$ for each $%
i=1,...,m+1.\medskip $

\bigskip Let $\omega ^{\ast }=(\omega _{1}^{\ast },\omega _{2}^{\ast
},...,\omega _{m+1}^{\ast })^{T}$ be a solution of $D(N).$ Then, $%
V(N)=\omega _{1}^{\ast }$ and let $u=(u_{1},u_{2},...,u_{n})^{T}$ such that $%
u_{i}=\frac{1}{n}\omega _{1}^{\ast }$ for each $i\in \{1,2,...,n\}.$

We will prove that $(u_{1},u_{2},...,u_{n})^{T}\in $core$(N,V).$

First, $\tsum_{i\in N}u_{i}=v(N).$

We must show, in addition, that $\tsum_{i\in S}u_{i}\geq v(S)$ for each $%
S\subset N.$

Notice that $\tsum_{i\in S}u_{i}=\frac{|S|}{n}\omega _{1}^{\ast }.$

The vector $(\frac{\omega _{1}^{\ast }}{\gamma },\frac{\omega _{2}^{\ast }}{%
\gamma },...,\frac{\omega _{m+1}^{\ast }}{\gamma })$ verifies the
restrictions of $D(S):$

$d_{0}\frac{\omega _{1}^{\ast }}{\gamma }=\frac{b(S)}{\gamma }(\omega
_{2}^{\ast },...,\omega _{m+1}^{\ast })^{T}=d_{0}\frac{\omega _{1}^{\ast }}{%
\gamma }-\frac{b(N)}{\gamma }(\omega _{2}^{\ast },...,\omega _{m+1}^{\ast
})^{T}+\frac{b(N)-b(S)}{\gamma }(\omega _{2}^{\ast },...,\omega _{m+1}^{\ast
})^{T}\geq c_{0}$

and

$d\frac{\omega _{1}^{\ast }}{\gamma }=\frac{1}{\gamma }A^{T}(\omega
_{2}^{\ast },...,\omega _{m+1}^{\ast })^{T}\geq c.$

It follows that $V(S)\leq \frac{\omega _{1}^{\ast }}{\gamma }.$

Since $\gamma \geq n,$ we have that $\frac{1}{\gamma }\leq \frac{1}{n}\leq 
\frac{|S|}{n}$ for each $S\subseteq N$ and then, $\tsum_{i\in S}u_{i}=\frac{%
|S|}{n}\omega _{1}^{\ast }\geq \frac{1}{\gamma }\omega _{1}^{\ast }\geq V(S)$
for each $S\subseteq N.$

We conclude that $(u_{1},u_{2},...,u_{n})^{T}\in $core$(N,V).$

\section{Multiobjective fractional linear programming production games}

\subsection{The model}

We prove that the game is balanced, but it is not superaditive. We give a
characterization of the Stable outcome of the associate cooperative
game.\medskip

Let $n$ be a fixed positive integer, let $N=\{1,2,...,n\}$ be the set of
players and $\mathcal{P}(N)$, the set of nonempty subsets of $N$ being the
set of coalitions formed with players $1,2,...,n.$

For each coalition $S\subseteq N,$ define the problem $P(S):$

\bigskip

$P(S):$

max $z_{1}(x)=\frac{N_{1}(x)}{D_{1}(x)}=\frac{c_{1}x+c_{10}}{d_{1}x+d_{10}}$

\ \ \ \ \ \ $z_{2}(x)=\frac{N_{2}(x)}{D_{2}(x)}=\frac{c_{2}x+c_{20}}{%
d_{2}x+d_{20}}$

...

\ \ \ \ \ $z_{r}(x)=\frac{N_{r}(x)}{D_{r}(x)}=\frac{c_{r}x+c_{r0}}{%
d_{r}x+d_{r0}}$

$a_{11}x_{1}+a_{12}x_{2}+...+a_{1p}x_{p}\leq b_{1}(S)$

$a_{21}x_{1}+a_{22}x_{2}+...+a_{2p}x_{p}\leq b_{2}(S)$

................................................ \ \ \ \ \ \ \ \ \ \ \ \ \ \
\ \ \ \ \ \ \ \ \ \ \ \ \ \ \ \ (4)

$a_{m1}x_{1}+a_{m2}x_{2}+...+a_{mp}x_{p}\leq b_{m}(S)$

$x_{1},x_{2},...,x_{p}\geq 0,\medskip $

where $c_{i},d_{i},x\in \mathbb{R}^{p}$ for each $i\in \{1,2,...,r),$ $%
A=(a_{ij})_{\substack{ i=\overline{1,m}  \\ j=\overline{1,p}}}\in \mathbb{R}%
^{m\times p},$ $b(S)=(b_{1}(S),b_{2}(S),...,b_{p}(S))^{T}\in \mathbb{R}^{m},$
$D_{S}=\{x\in \mathbb{R}^{p}:Ax\leq b(S),$ $x\geq 0\}\subseteq \mathbb{R}%
^{p}.\medskip $

Let $F(x)=(\frac{c_{1}x+c_{10}}{d_{1}x+d_{10}},...,\frac{c_{r}x+c_{r0}}{%
d_{r}x+d_{r0}}).$

Let $I=\{i:N_{i}(x)\geq 0$ for some $x\in D\}$

\ \ \ $I^{C}=\{i:N_{i}(x)\leq 0$ for every $x\in D\}.$

Suppose $D$ is nonempty and bounded.

We present here a model developement due to Chakraborty and Gupta (2002).

Let $t=\cap _{i\in I}\frac{1}{d_{i}x+d_{i0}}$ and $t=\cap _{i\in I^{C}}\frac{%
-1}{c_{i}x+c_{i0}}\Leftrightarrow \frac{1}{d_{i}x+d_{i0}}\geq t$ for each $%
i\in I$ and $\frac{-1}{c_{i}x+c_{i0}}\geq t$ for each $i\in I^{C}$ and $%
y=tx. $

The multiobjective linear fractional programming problem (4) is equivalent
with the multiobjective linear programming problem (5):\medskip

Max $\{tN_{i}(\frac{y}{t})$ if $i\in I,$ $tD_{i}(\frac{y}{t})$ $i\in I^{C}\}$

$tD_{i}(\frac{y}{t})\leq 1$ if $i\in I$

$tN_{i}(\frac{y}{t})\leq 1$ if $i\in I^{C}$\ \ \ \ \ \ \ \ \ \ \ \ \ \ \ \ \
\ \ \ \ \ \ \ \ \ \ \ \ \ \ \ \ \ \ \ \ \ \ \ \ \ \ \ \ \ (5)

$A(\frac{y}{t})-b(S)\leq 0$

$t,y\geq 0.\medskip $

Chakraborty and Gupta (2002) proved that the constraint set of (5) is
non-empty convex set having feasible points.

We will assume further that $I^{C}=\emptyset .$

We will use the following notations:

$T_{S}=\{(t,y):(t,y)$ verifies the restriction of the problem $(2)\};$

$\widehat{T}_{S}=\{(z\in \mathbb{R}^{r}:z=F(t,y),$ $(t,y)\in T_{s}\}$

$V(S)=($Max$\widehat{T}_{S}-\mathbb{R}_{+}^{r})\cap \mathbb{R}_{+}^{r}.$

\subsection{The associated cooperative game}

We prove that the game is balanced, but it is not superaditive. We give a
characterization of the Stable outcome of the associate cooperative
game.\medskip

Let $N=\{1,2,...,n\}$ and $V:P(N)\rightarrow \mathbb{R}^{r}$ be defined by

$V(S)=($Max$\widehat{T}_{S}-\mathbb{R}_{+}^{r})\cap \mathbb{R}_{+}^{r}$ for
each $S\subset N$

and

$V(N)=($Max$\gamma \widehat{T}_{N}-\mathbb{R}_{+}^{r})\cap \mathbb{R}%
_{+}^{r} $

where $\gamma >\max (\gamma ^{\ast },n)$ and $\gamma ^{\ast }=\max_{\mathcal{%
B}}\tsum_{S\in \mathcal{B}}\gamma (S),$ $\mathcal{B}$ being any balanced
coalition of $N.$

The set of imputation of the game is

$I(N,V)=\{x\in \mathbb{R}_{+}^{m\times n}:x_{N}\in $Max$V_{N},$ $x_{i}\notin
V_{\{i\}}\backslash $Max$V_{\{i\}},$ $\forall i\in N\}$

The stable outcome is

$SO(N,V)=\{x\in \mathbb{R}_{+}^{m\times n}:x_{S}\notin V(S)\backslash $Max$%
V(S),$ $\forall S\subset N\}.$

\medskip The problem (5) is equivalent with the problem (6)

Max $c_{i}y+c_{i0}t,$ $i=1,...,m;$

$d_{i}y+d_{i0}t\leq 1$ $i=1,...,m;$

$A(y)-tIb(S)\leq 0$ \ \ \ \ \ \ \ \ \ \ \ \ \ \ \ \ \ \ \ \ \ \ \ \ \ \ \ \
\ \ \ \ \ \ \ \ \ \ \ \ \ \ \ \ \ \ \ \ \ (6)

$t,y\geq 0.\medskip $

\begin{theorem}
The game $(N,V)$ is balanced.\medskip
\end{theorem}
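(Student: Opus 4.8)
The plan is to mimic the balancedness argument already carried out in the single-objective case (Theorem 3), but now working with the vector-valued characteristic function $V$ and the Pareto-type sets $\widehat T_S$. The statement I expect to prove is: for every balanced collection $\mathcal B$ with weights $\gamma(S)$, one has $\sum_{S\in\mathcal B}\gamma(S)V(S)\subseteq V(N)$ (in the appropriate set-inclusion sense for $\mathbb R^r$-valued games, i.e. every selection $\sum_{S\in\mathcal B}\gamma(S)z_S$ with $z_S\in V(S)$ is dominated by some point of $V(N)$). First I would fix a balanced collection $\mathcal B$, recall the key identity $\sum_{S\in\mathcal B}\gamma(S)b_k(S)=b_k(N)$ for each $k$, and for each $S\in\mathcal B$ pick a feasible $(t(S),y(S))\in T_S$ whose image $F(t(S),y(S))$ realizes a given point $z_S\in V(S)$ (or dominates it, since $V(S)$ is obtained by subtracting $\mathbb R^r_+$).

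Next I would set $\gamma'=\sum_{S\in\mathcal B}\gamma(S)$ and form the convex-combination-type aggregates $\widehat y=\sum_{S\in\mathcal B}\frac{\gamma(S)}{\gamma'}y(S)$ and $\widehat t=\sum_{S\in\mathcal B}\frac{\gamma(S)}{\gamma'}t(S)$, exactly as in the proof of Theorem 3. The three families of constraints of problem (6) — namely $Ay-tIb(S)\le 0$, $d_i y+d_{i0}t\le 1$ for each $i$, and $t,y\ge 0$ — are each preserved under taking nonnegative combinations: the resource constraints aggregate using $\sum_{S}\gamma(S)b(S)=b(N)$ to give $A\widehat y\le \widehat t\, b(N)$; the normalization constraints $d_i y+d_{i0}t\le 1$ aggregate because the weights $\gamma(S)/\gamma'$ sum to $1$; and nonnegativity is obvious. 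Hence $(\widehat t,\widehat y)$ is feasible for the problem associated to $N$, so $\gamma'\,F(\widehat t,\widehat y)$ lies in $\gamma\widehat T_N$ up to the $-\mathbb R^r_+$ slack, provided $\gamma>\gamma'$ — which holds by the choice $\gamma>\max(\gamma^\ast,n)\ge\gamma^\ast\ge\gamma'$.

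The remaining step — and the point where the multiobjective case genuinely differs from Theorem 3 — is to compare $\sum_{S\in\mathcal B}\gamma(S)z_S$ with $\gamma'\,F(\widehat t,\widehat y)$ coordinatewise. Here the obstacle is that $F$ is a \emph{vector of fractional linear functions}, not a linear map, so $\sum_S \frac{\gamma(S)}{\gamma'}F(t(S),y(S))$ need not equal $F(\widehat t,\widehat y)$. The way around this is that after the Chakraborty--Gupta substitution we are in problem (6), whose objectives $c_i y+c_{i0}t$ \emph{are} linear in $(t,y)$; therefore on the transformed variables the aggregate objective value is genuinely linear, $\sum_S\gamma(S)(c_i y(S)+c_{i0}t(S)) = \gamma'(c_i\widehat y+c_{i0}\widehat t)$ for each $i=1,\dots,r$, and this is precisely the $i$-th coordinate of $\gamma'\,F$ evaluated at the transformed point. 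Since $\gamma>\gamma'$ and each objective is nonnegative on the feasible region (recall $I^C=\emptyset$), scaling up from $\gamma'$ to $\gamma$ only pushes the value further out, so $\sum_{S\in\mathcal B}\gamma(S)z_S$ is dominated by a point of $(\mathrm{Max}\,\gamma\widehat T_N-\mathbb R^r_+)\cap\mathbb R^r_+=V(N)$. Intersecting with $\mathbb R^r_+$ causes no trouble because all the quantities involved are nonnegative by construction. This establishes $\sum_{S\in\mathcal B}\gamma(S)V(S)\subseteq V(N)$, i.e.\ the game $(N,V)$ is balanced.
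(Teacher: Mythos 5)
Your proposal is correct and follows essentially the same route as the paper's own proof: aggregate the transformed feasible points with weights $\gamma(S)/\gamma'$, use the identity $\sum_{S\in\mathcal B}\gamma(S)b_k(S)=b_k(N)$ to verify feasibility of $(\widehat y,\widehat t)$ for the grand coalition, exploit the linearity of the post-substitution objectives to identify the aggregated objective values with those at $(\widehat y,\widehat t)$, and absorb the gap between $\gamma'$ and $\gamma$ using $\gamma>\gamma^\ast\ge\gamma'$ and nonnegativity. If anything, you are more explicit than the paper about the two delicate points (why the fractional objectives behave linearly after the Chakraborty--Gupta substitution, and why the $\gamma$-scaling and the $-\mathbb R^r_+$/Max bookkeeping go through), which the paper leaves largely implicit.
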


\begin{proof}
Let $\mathcal{B}$ be a balanced colection of $N.$ First, we have that

$\tsum_{S\subset N}\gamma (S)b_{k}(S)=\tsum_{S\subset N}\tsum_{i\in S}\gamma
(S)b_{k}^{i}(S)=\tsum_{i\in N}\{\tsum_{S\subset N,S\backepsilon i}\gamma
(S)\}b_{k}^{i}=\tsum_{i\in N}b_{k}^{i}=b_{k}(N)$

for each $k\in \{1,2,...,m\}.$

Let $z(y(S),t(S))=(z_{1}(y(S),t(S)),z_{2}(y(S),t(S)),...,z_{l}(y(S),t(S)))%
\in V(S)$ and then,

$\tsum_{S\in \mathcal{B}}\frac{\gamma (S)}{\gamma ^{\prime }}z(y(S),t(S))=$

$=\tsum_{S\in \mathcal{B}}\frac{\gamma (S)}{\gamma ^{\prime }}%
(...,c_{i}y(S)+y(S)c_{i0},...)=$

$=(\widehat{t}N_{1}(\frac{\widehat{y}}{\widehat{t}}),\widehat{t}N_{2}(\frac{%
\widehat{y}}{\widehat{t}}),...,\widehat{t}N_{l}(\frac{\widehat{y}}{\widehat{t%
}}))=$

$=z(\widehat{y},\widehat{t}),$

where $\gamma ^{^{\prime }}=\tsum_{S\in \mathcal{B}}\gamma (S),$ $\widehat{y}%
:=\tsum_{S\in \mathcal{B}}\frac{\gamma (S)}{\gamma ^{\prime }}y(S)$ and $%
\widehat{t}=\tsum_{S\in \mathcal{B}}\frac{\gamma (S)}{\gamma ^{\prime }}%
t(S), $ $\widehat{y}\in \mathbb{R}_{+}^{p},\widehat{t}\in \mathbb{R}_{+}.$

Assume that $(y(S),t(S))\in D_{S}.$

Since $Ay(S)\leq t(S)b(S)$ for each $S\in \mathcal{B}$ and $\frac{\gamma (S)%
}{\gamma ^{\prime }}\geq 0,$ it follows that

$A\frac{\gamma (S)}{\gamma ^{\prime }}y(s)\leq t(S)\frac{\gamma (S)}{\gamma
^{\prime }}b(S)$. By adding, we obtain $A(\tsum_{S\in \mathcal{B}}\frac{%
\gamma (S)}{\gamma ^{\prime }}y(s))\leq (\tsum_{S\in \mathcal{B}}\frac{%
\gamma (S)}{\gamma ^{\prime }}b(S))t(S),$ and then $A(\tsum_{S\in \mathcal{B}%
}\frac{\gamma (S)}{\gamma ^{\prime }}y(s))\leq b(N)(\tsum_{S\in \mathcal{B}}%
\frac{\gamma (S)}{\gamma ^{\prime }}t(S)).$ Therefore, $A\widehat{y}\leq b(N)%
\widehat{t}$, that is $(\widehat{y},\widehat{t})$ verifies $Ay-tb(N)\leq 0$

Since $dy(S)+d_{0}t(S)\leq 1$ for each $S\in \mathcal{B}$ and $\frac{\gamma
(S)}{\gamma ^{\prime }}\geq 0,$ it follows that $d\tsum_{S\in \mathcal{B}}%
\frac{\gamma (S)}{\gamma ^{\prime }}y(S)+d_{0}\tsum_{S\in \mathcal{B}}\frac{%
\gamma (S)}{\gamma ^{\prime }}t(S)\leq 1$ and then, $d\widehat{y}+d_{0}%
\widehat{t}\leq 1.$

We notice that $\widehat{y},\widehat{t}\geq 0$ and conclude that $(\widehat{y%
},\widehat{t})\in V(N)$ and therefore $\tsum_{S\subset N}\gamma
(S)(y(S),t(S))\in V(N)$....

Since $\tsum_{S\subset N}\gamma (S)V(S)\subset V(N),$ it follows that is the
game $(N,V)$ is balanced.
\end{proof}

We consider the dual problem to the multiobjective linear problem in order
to find a point beloging to the core. We present here some useful results
concerning the duality of the multiobjective linear programming.

Let the primal and the dual problems as follows:

max $z(x)=Cx$

\ \ \ \ \ $z\in T_{p}=\{x:Ax=b,$ $x\in \mathbb{R}_{+}^{p}\}$ \ \ \ \ \ \ \ \
\ \ \ \ \ \ \ \ \ \ \ \ \ (7)

and respectively

\ min $g(w)=wb$

\ \ \ \ \ \ \ $w\in T_{d}=\{w:wAu\leq Cu$ for no $u\in \mathbb{R}_{+}^{p}\},$
\ \ \ \ \ \ \ \ \ \ \ \ \ \ \ \ \ \ (8)

where $z(x)=(z_{1}(x),z_{2}(x),...,z_{r}(x)\},$ $%
g(w)=(g_{1}(w),g_{2}(w),...,g_{r}(w)),$ $C\in \mathbb{R}^{r+p},$ $A\in 
\mathbb{R}^{m+p},$ $b\in C\in \mathbb{R}^{m}.$

We will use the following theorems.

\begin{theorem}
If $x$ is a feasible solution of primal problem (7) and $w$ is a feasible
solution of dual problem (8), it is not the case that $g(w)\leq z(x).$
\end{theorem}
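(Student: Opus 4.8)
The plan is to argue by contradiction and reduce the claim directly to the defining condition of dual feasibility; no deep machinery is needed beyond the associativity $w(Ax)=(wA)x$ and fixing the convention that the vector order ``$\leq$'' appearing in the statement and in the set $T_{d}$ is the Pareto order on $\mathbb{R}^{r}$ (componentwise inequality together with non-equality), which is the standard order in Isermann-type vector linear duality and is the only reading under which $T_{d}$ is not automatically empty. First I would suppose, contrary to the assertion, that $g(w)\leq z(x)$, i.e. $wb\leq Cx$ with $wb\neq Cx$.

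Next, since $x$ is feasible for the primal problem (7) we have $Ax=b$ and $x\in \mathbb{R}_{+}^{p}$. Substituting $b=Ax$ into $g(w)=wb$ gives $g(w)=w(Ax)=(wA)x$, so the assumed inequality becomes $(wA)x\leq Cx$ with $(wA)x\neq Cx$; that is, the point $u:=x$ lies in $\mathbb{R}_{+}^{p}$ and satisfies $wAu\leq Cu$ in the Pareto sense.

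Finally, the existence of such a $u$ contradicts the defining property of $T_{d}$ in (8), namely that $wAu\leq Cu$ holds for \emph{no} $u\in \mathbb{R}_{+}^{p}$. Hence the assumption was false, and it cannot be that $g(w)\leq z(x)$, which is exactly the claim. If one instead intends the strict order $<$ in both places, the same substitution works verbatim.

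The only genuine point of care --- the step I would treat as the main (if modest) obstacle --- is making the order conventions explicit and checking that the substitution $b\mapsto Ax$ transports not merely the componentwise inequality but also the non-equality part, so that the produced $u$ is a bona fide witness to a violation of dual feasibility. Once the orders are pinned down, the argument is immediate.
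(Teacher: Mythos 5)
Your proof is correct. Note that the paper itself states this theorem without proof --- it is quoted as a known result from the duality theory of multiobjective linear programming (in the style of Isermann, as used by Nishizaki and Sakawa) --- so there is no in-paper argument to compare against; your argument is the standard weak-duality one and it closes the statement completely. The single substantive step, substituting $b=Ax$ to turn the assumed relation $wb\leq Cx$ into $(wA)x\leq Cx$ and thereby exhibiting $u=x\in\mathbb{R}_{+}^{p}$ as a witness violating the defining condition of $T_{d}$, is exactly right, and you are also right that the only point requiring care is the order convention: since $wb$ and $(wA)x$ are \emph{identically equal} vectors (not merely related by an inequality), both the componentwise inequality and the non-equality part of the Pareto order transfer verbatim, so the witness is genuine.
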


\begin{theorem}
Assume that $x^{\ast }$ is a feasible solution of primal problem (7) and $w$
is a feasible solution of dual problem (8). Also assume that $z(x^{\ast
})=g(w^{\ast })$ is satisfied. Then, $x^{\ast }$ is a Pareto optimal
solution of primal problem (7), and $w^{\ast }$ is a Pareto optimal solution
of dual problem (8).
\end{theorem}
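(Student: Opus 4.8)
The plan is to obtain both Pareto-optimality assertions as immediate corollaries of the weak duality theorem stated just above, via the standard contradiction argument; there is no genuine analytic difficulty here, so the only real care needed is to keep the vector orderings oriented correctly. Throughout I read the symbol $\leq$ on $\mathbb{R}^{r}$ componentwise and say that $v$ \emph{dominates} $u$ when $u\leq v$ and $u\neq v$; with this reading the weak duality theorem says precisely that for $x$ feasible in (7) and $w$ feasible in (8) the vector $z(x)$ cannot dominate $g(w)$, i.e.\ one never has simultaneously $g(w)\leq z(x)$ and $g(w)\neq z(x)$. (This is also the only reading that is consistent with the hypothesis $z(x^{\ast})=g(w^{\ast})$ of the present theorem, since that hypothesis exhibits a feasible pair with $g(w^{\ast})\leq z(x^{\ast})$.)

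First I would show that $x^{\ast}$ is Pareto optimal for (7). Suppose not; then there is a feasible $\bar{x}\in T_{p}$ with $z(\bar{x})\geq z(x^{\ast})$ and $z(\bar{x})\neq z(x^{\ast})$, so that $z(\bar{x})$ dominates $z(x^{\ast})$. Using $z(x^{\ast})=g(w^{\ast})$ this gives $g(w^{\ast})\leq z(\bar{x})$ with $g(w^{\ast})\neq z(\bar{x})$, i.e.\ $z(\bar{x})$ dominates $g(w^{\ast})$, which contradicts the weak duality theorem applied to the feasible pair $(\bar{x},w^{\ast})$. Hence no such $\bar{x}$ exists and $x^{\ast}$ is Pareto optimal for (7).

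The argument for $w^{\ast}$ is symmetric, with the orientation reversed for the minimization problem (8). If $w^{\ast}$ were not Pareto optimal for (8), there would be a feasible $\bar{w}\in T_{d}$ with $g(\bar{w})\leq g(w^{\ast})$ and $g(\bar{w})\neq g(w^{\ast})$; substituting $g(w^{\ast})=z(x^{\ast})$ yields $g(\bar{w})\leq z(x^{\ast})$ with $g(\bar{w})\neq z(x^{\ast})$, i.e.\ $z(x^{\ast})$ dominates $g(\bar{w})$, again contradicting the weak duality theorem, this time for the feasible pair $(x^{\ast},\bar{w})$. Therefore $w^{\ast}$ is Pareto optimal for (8), which completes the proof. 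The main obstacle, such as it is, is purely bookkeeping: making sure the somewhat informally stated inequalities in (7), (8) and the weak duality theorem are used with the correct strict/non-strict status and the correct $\max$/$\min$ orientation, since an orientation slip would render the derived contradiction vacuous.
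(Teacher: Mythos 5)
Your proof is correct. The paper itself states this theorem without proof (it is quoted, together with the surrounding duality theorems, as a known result from multiobjective linear programming), but your argument is exactly the standard one the paper's setup invites: both Pareto-optimality claims follow by contradiction from the weak duality theorem stated immediately before (that for feasible $x$ and $w$ one never has $g(w)\leq z(x)$ in the dominance sense), and your preliminary clarification of how $\leq$ must be read there -- componentwise with $g(w)\neq z(x)$, since otherwise the hypothesis $z(x^{\ast})=g(w^{\ast})$ would already violate weak duality -- is precisely the bookkeeping point that needed to be made. Both directions of the contradiction are oriented correctly, so there is nothing to add.
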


\begin{theorem}
Considering main problem (7) and dual problem (8), the following two
statements are equivalent.
\end{theorem}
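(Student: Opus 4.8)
The plan is to read the statement as the strong-duality equivalence that naturally completes the two preceding theorems: a feasible point $x^{\ast}$ of the primal problem (7) is Pareto optimal if and only if there exists a feasible point $w^{\ast}$ of the dual problem (8) with $z(x^{\ast})=g(w^{\ast})$, i.e.\ $Cx^{\ast}=w^{\ast}b$. One implication is already available: if such a $w^{\ast}$ exists, the hypotheses of the preceding theorem are met, and it yields at once that $x^{\ast}$ is Pareto optimal for (7) (and, symmetrically, that $w^{\ast}$ is Pareto optimal for (8)). So the entire content lies in the converse.

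For the converse I would first use the polyhedral structure. By the classical scalarization property of efficient points of a linear vector-maximization problem, every Pareto optimal $x^{\ast}\in T_{p}$ is supported by a \emph{strictly positive} weight vector: there is $\lambda\in\mathbb{R}^{r}$, $\lambda>0$, with $x^{\ast}$ maximizing $\lambda^{T}Cx$ over $T_{p}=\{x:Ax=b,\ x\geq 0\}$. This is exactly where linearity is essential --- for a general convex vector problem one could only guarantee $\lambda\geq 0$, $\lambda\neq 0$; the cleanest way to obtain the strictly positive $\lambda$ is to apply a theorem of the alternative (Motzkin's or Tucker's) to the system expressing that no feasible point dominates $x^{\ast}$. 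Next I would invoke ordinary linear-programming strong duality for the scalar program $\max\{\lambda^{T}Cx:Ax=b,\ x\geq 0\}$, obtaining a vector $u^{\ast}\in\mathbb{R}^{m}$ with $u^{\ast}A\geq\lambda^{T}C$, the complementary-slackness identity $(u^{\ast}A-\lambda^{T}C)x^{\ast}=0$, and equal optimal values $u^{\ast}b=\lambda^{T}Cx^{\ast}$.

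The step I expect to be the main obstacle is lifting the scalar multiplier $u^{\ast}$ to a matrix $w^{\ast}\in\mathbb{R}^{r\times m}$ that is dual feasible, $w^{\ast}\in T_{d}$ (so that no $u\in\mathbb{R}^{p}_{+}$ satisfies $w^{\ast}Au\leq Cu$), and that satisfies the \emph{full} vector identity $w^{\ast}b=Cx^{\ast}$ rather than only its $\lambda$-weighted version $\lambda^{T}(w^{\ast}b)=\lambda^{T}(Cx^{\ast})$. Following Isermann's construction, I would write a linear system for the rows of $w^{\ast}$ built from $\lambda>0$, the inequality $u^{\ast}A\geq\lambda^{T}C$ and the equation $Ax^{\ast}=b$, and prove its solvability by another theorem of the alternative; dual feasibility $w^{\ast}\in T_{d}$ then follows because any $u\geq 0$ with $w^{\ast}Au\leq Cu$ would, after multiplication by $\lambda>0$, contradict $u^{\ast}A\geq\lambda^{T}C$, while $w^{\ast}b=Cx^{\ast}$ is read off from the complementary-slackness relations together with $Ax^{\ast}=b$. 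Combining the two implications gives the equivalence; as an immediate corollary one also obtains the symmetric existence form, namely that (7) has a Pareto optimal solution precisely when (8) does.
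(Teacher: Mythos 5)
You have proved a different statement from the one the theorem actually asserts. The two statements declared equivalent are listed in the paper immediately after the theorem environment: \emph{(1)} each of the problems (7) and (8) has a feasible solution; \emph{(2)} each of them has a Pareto optimal solution, and there exists at least one pair of Pareto optimal solutions with $z(x^{\ast})=g(w^{\ast})$. What you prove instead --- that a feasible $x^{\ast}$ is Pareto optimal for (7) if and only if there is a dual-feasible $w^{\ast}$ with $z(x^{\ast})=g(w^{\ast})$ --- is precisely the content of the \emph{next} theorem in the paper, so as an answer to this one it misses the target. (Note also that the paper supplies no proof of either result: both are quoted as known facts of Isermann-type multiobjective LP duality in the form used by Nishizaki and Sakawa, so there is no in-paper argument to compare yours against.)

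That said, your technical core --- strictly positive scalarization of efficient points of a linear vector program, scalar strong duality for $\max\{\lambda^{T}Cx : Ax=b,\ x\geq 0\}$, and the Isermann lifting of the scalar multiplier $u^{\ast}$ to a dual-feasible matrix $w^{\ast}$ with $w^{\ast}b=Cx^{\ast}$ --- is exactly the engine the correct statement needs; it only has to be repointed. The direction $(2)\Rightarrow(1)$ is immediate, since Pareto optimal solutions are in particular feasible. For $(1)\Rightarrow(2)$ you must first supply an existence step that your write-up never addresses: from feasibility of the dual (8) one extracts some $\lambda>0$ for which the scalar primal is bounded above on $T_{p}$, hence attains its maximum at some $x^{\ast}$, which is then Pareto optimal for (7); only after that do your scalar-duality and lifting steps produce the matching dual Pareto optimal $w^{\ast}$ with $g(w^{\ast})=z(x^{\ast})$. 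Without that boundedness-and-attainment argument, the passage from mere feasibility to the existence of efficient solutions --- which is the actual content of this theorem, as opposed to the pointwise characterization you wrote out --- remains unproven.
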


\ \ \ \ \ \ \ \ \ \ \ \ \ \ \ \textit{(1) Each of the problems has a
feasible solution.}

\textit{\ \ \ \ \ \ \ \ \ \ \ \ \ \ \ \ (2) Each of the problem has a Pareto
optimal solution, and there exists at least a pair of Pareto optimal
solutions such that }$z(x^{\ast })=g(w^{\ast }).$

\begin{theorem}
The necessary and sufficient condition for $x^{\ast }$ to be a Pareto
optimal solution of primal problem (7) is that there exists a feasible
solution $w^{\ast }$ of dual problem (8) such that $z(x^{\ast })=g(w^{\ast
}).$ Then, $w^{\ast }$ itself is a Pareto optimal solution of dual problem
(8).\medskip
\end{theorem}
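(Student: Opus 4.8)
The plan is to prove the two implications separately; the sufficiency half, including the closing assertion on $w^{\ast}$, is essentially already contained in the preceding results, so the real work is in necessity. \emph{Sufficiency.} Suppose there is a feasible $w^{\ast}$ of the dual problem (8) with $z(x^{\ast})=g(w^{\ast})$. Since $x^{\ast}$ is then a feasible solution of (7), this is exactly the hypothesis of the theorem just above (the one asserting that a primal--dual feasible pair with equal objective values consists of Pareto optimal solutions), which immediately yields that $x^{\ast}$ is a Pareto optimal solution of (7) and that $w^{\ast}$ is a Pareto optimal solution of (8). Hence no new argument is needed for this direction.

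\emph{Necessity, step 1 (scalarization).} Assume $x^{\ast}$ is Pareto optimal for (7). I would first produce a strictly positive multiplier, that is, a $\lambda\in\mathbb{R}^{r}$ with $\lambda\geq(1,\dots,1)^{T}$ such that $x^{\ast}$ maximizes $\lambda^{T}Cx$ over $T_{p}=\{x:Ax=b,\ x\geq 0\}$, together with $v^{\ast}\in\mathbb{R}^{m}$ satisfying $A^{T}v^{\ast}\geq C^{T}\lambda$ and $b^{T}v^{\ast}=\lambda^{T}Cx^{\ast}$. A self-contained route is to observe that Pareto optimality of $x^{\ast}$ is equivalent to the scalar linear program $\max\{\sum_{i=1}^{r}\varepsilon_{i}:\ Cx-\varepsilon\geq Cx^{\ast},\ Ax=b,\ x\geq 0,\ \varepsilon\geq 0\}$ having optimal value $0$ (this value is always $\geq 0$ and is attained at $(x^{\ast},0)$), and then to read $\lambda=\mu^{\ast}$ and $v^{\ast}$ off an optimal solution $(\mu^{\ast},v^{\ast})$ of its linear-programming dual: the zero optimal value gives precisely $b^{T}v^{\ast}=\lambda^{T}Cx^{\ast}$, while the dual constraints give $A^{T}v^{\ast}\geq C^{T}\lambda$ and $\lambda\geq(1,\dots,1)^{T}$. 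Alternatively one simply invokes the classical fact that the efficient solutions of a linear multiobjective program are the optimal solutions of positively weighted scalarizations, and then applies ordinary LP strong duality to the scalarized program.

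\emph{Necessity, step 2 (building $w^{\ast}$).} I would then look for $w^{\ast}\in\mathbb{R}^{r\times m}$ subject to the two linear conditions $\lambda^{T}w^{\ast}=v^{\ast T}$ and $w^{\ast}b=z(x^{\ast})=Cx^{\ast}$. This system is consistent precisely because of the strong-duality identity $v^{\ast T}b=\lambda^{T}Cx^{\ast}$ from step 1 (and because $b\neq 0$, which holds since $b=b(S)$ is a nonzero resource vector): start from the particular solution of the first condition whose $k$-th column is $\lambda\,v^{\ast}_{k}/\|\lambda\|^{2}$, and correct it by a matrix whose columns lie in $\lambda^{\perp}$ so as to achieve $w^{\ast}b=Cx^{\ast}$, the correction being admissible because $Cx^{\ast}-(\lambda^{T}Cx^{\ast}/\|\lambda\|^{2})\lambda$ lies in $\lambda^{\perp}$. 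For this $w^{\ast}$ one has $g(w^{\ast})=w^{\ast}b=Cx^{\ast}=z(x^{\ast})$, and $w^{\ast}\in T_{d}$: were there a $u\geq 0$ with $w^{\ast}Au\leq Cu$, then $\lambda^{T}(w^{\ast}A-C)u$ would be $\leq 0$ (as $\lambda>0$ and $(w^{\ast}A-C)u\leq 0$) and also $\geq 0$ (as $\lambda^{T}w^{\ast}A=v^{\ast T}A\geq\lambda^{T}C$ and $u\geq 0$), hence $0$, which, $\lambda$ being strictly positive, forces $(w^{\ast}A-C)u=0$ and contradicts $w^{\ast}Au\leq Cu$. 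So $w^{\ast}$ is feasible for (8) with $z(x^{\ast})=g(w^{\ast})$, and applying once more the theorem quoted in the sufficiency part shows that $x^{\ast}$ and $w^{\ast}$ are Pareto optimal, finishing the necessity direction and hence the theorem.

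\emph{Main obstacle.} I expect step 2 to be the delicate part: fixing the correct reading of dual feasibility in (8) and verifying it for the constructed matrix, which comes down to the alternative-type equivalence ``$w\in T_{d}$ if and only if there is $\lambda\gg 0$ with $\lambda^{T}(wA-C)\geq 0$''; this is exactly what makes the strictly positive multiplier from step 1 effective. Securing that the scalarization multiplier is genuinely strictly positive (rather than merely nonnegative and nonzero), and disposing of the degenerate case $b=0$, are the remaining fine points, the latter being vacuous in the production-game setting where $b(S)\neq 0$.
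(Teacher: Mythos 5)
The paper does not actually prove this statement: it is the last of four duality results for the pair (7)--(8) that are quoted without proof from the multiobjective linear programming literature (this is Isermann's strong duality theorem, in the form used by Nishizaki and Sakawa), so there is no in-paper argument to compare yours against. Judged on its own, your proof is correct and is essentially the classical one. The sufficiency half is indeed an immediate application of the preceding theorem, granting the implicit hypothesis that $x^{\ast}$ is primal feasible. In step 1, the Ecker--Kouada-type test LP does have optimal value $0$ exactly when $x^{\ast}$ is efficient, and its LP dual delivers $\lambda \geq (1,\dots ,1)^{T}$ and $v^{\ast}$ with $A^{T}v^{\ast}\geq C^{T}\lambda$ and $b^{T}v^{\ast}=\lambda ^{T}Cx^{\ast}$, so the strict positivity you worry about at the end is automatic. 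Your rank-one-plus-correction construction of $w^{\ast}$ and the positive-multiplier argument for $w^{\ast}\in T_{d}$ are sound, provided the relation $wAu\leq Cu$ in the definition of $T_{d}$ is read as componentwise $\leq$ with at least one strict inequality; that reading is in any case forced, since otherwise $u=0$ would make $T_{d}$ empty. The one caveat you flag that is genuinely substantive is $b=0$: the theorem as literally stated can fail there (take $A=0$, $b=0$, $C$ the column pair $(1,-1)^{T}$, $x^{\ast}=1$; then $x^{\ast}$ is efficient with $z(x^{\ast})=(1,-1)^{T}$ while $g(w)=wb=0$ for every $w$, and every $w$ is dual feasible), so a hypothesis $b\neq 0$ is needed --- harmless for the paper's application, where the right-hand side $b^{\prime}=(1_{I},0_{\mathbb{R}^{m}})$ is nonzero. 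In short: you have supplied a correct proof of a result the paper merely cites, and in doing so you have identified a hypothesis the quoted statement silently requires.
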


The next theorem gives an element of the Stable outcome.

\begin{theorem}
Let $\omega ^{\ast }$ be a Pareto optimal solution of the dual problem of
the associated multiobjective linear programming problem $(3)$ with $S=N.$
Then the payoff $u=(u_{1\cdot },u_{2\cdot },...,u_{n\cdot })\in \mathbb{R}%
^{r\times n},$ $u_{i\cdot }=(u_{i1},u_{i2},...,u_{ir})$ defined by $u_{ik}=%
\frac{1}{n}\omega _{k1}^{\ast },$ $i=1,2,...,n$ and $k=1,2,...,r$ belongs to
the stable outcome of the game $(N,V).$
\end{theorem}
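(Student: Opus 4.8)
The plan is to mimic, in the multiobjective setting, the single-objective argument that was carried out just before Section 3, replacing scalar inequalities by the vector dominance structure used to define $V(S)$ and $SO(N,V)$. First I would set $u_{ik}=\tfrac1n\omega^\ast_{k1}$ and note that, summing over $i\in N$, the coalition $N$ receives $u_N=\bigl(\sum_i u_{i1},\dots,\sum_i u_{ir}\bigr)=(\omega^\ast_{11},\dots,\omega^\ast_{r1})$. By the duality theorems quoted above (in particular the one asserting that a Pareto optimal $\omega^\ast$ of the dual satisfies $z(x^\ast)=g(\omega^\ast)$ for some Pareto optimal primal $x^\ast$), this vector equals the value $g(\omega^\ast)$ of the dual objective at $\omega^\ast$, which in turn equals the objective value of an optimal primal solution of $(6)$ with $S=N$; hence $u_N\in \mathrm{Max}\,\widehat T_N$, and after the scaling by $\gamma$ in the definition of $V(N)$ we get $u_N\in\mathrm{Max}\,V(N)$, so in particular $u_N\notin V(N)\setminus\mathrm{Max}\,V(N)$.

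Next I would check the defining condition of $SO(N,V)$: for every $S\subsetneq N$, $u_S\notin V(S)\setminus\mathrm{Max}\,V(S)$, where $u_S=\bigl(\sum_{i\in S}u_{i1},\dots,\sum_{i\in S}u_{ir}\bigr)=\tfrac{|S|}{n}(\omega^\ast_{11},\dots,\omega^\ast_{r1})$. The key observation, exactly as in the scalar proof, is that $\tfrac1\gamma\omega^\ast$ is feasible for the dual problem $D(S)$ of $(6)$: from $A^T(\text{dual vars})\ge\gamma c_i$ and $d_{i0}\omega_{1}-b(N)(\cdots)\ge\gamma c_{i0}$ for $N$, dividing by $\gamma$ and using $b(N)-b(S)\ge 0$ together with $(\omega^\ast_{2},\dots)\ge 0$ gives the inequalities $\ge c_i$, $\ge c_{i0}$ required by $D(S)$, componentwise for each objective $i=1,\dots,r$. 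Then Theorem~(the weak-duality one, asserting $g(w)\le z(x)$ never holds) applied to $D(S)$ versus $P(S)$ shows that no feasible point of $\widehat T_S$ strictly dominates $\tfrac1\gamma(\omega^\ast_{11},\dots,\omega^\ast_{r1})$ in the sense of $V(S)=(\mathrm{Max}\,\widehat T_S-\mathbb R^r_+)\cap\mathbb R^r_+$. Since $\gamma\ge n\ge|S|$, we have $\tfrac{|S|}{n}\ge\tfrac1\gamma$ componentwise, so $u_S\ge\tfrac1\gamma(\omega^\ast_{11},\dots,\omega^\ast_{r1})$, and therefore $u_S$ is not strictly dominated by any element of $\widehat T_S$ either; equivalently $u_S\notin V(S)\setminus\mathrm{Max}\,V(S)$. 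This holds for all $S\subsetneq N$, which together with the $S=N$ case establishes $u\in SO(N,V)$.

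I would also have to verify the tacit requirement that $u\ge 0$ componentwise, which follows from $\omega^\ast_{k1}\ge 0$ (the dual variables are constrained to be nonnegative), and that $u_S\in\mathbb R^r_+$ so that the intersection with $\mathbb R^r_+$ in the definition of $V(S)$ is the relevant comparison — again immediate from $\omega^\ast\ge 0$.

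The main obstacle I anticipate is the passage from ``$\tfrac1\gamma\omega^\ast$ is dual-feasible for $D(S)$ and $u_S$ dominates it'' to the clean conclusion ``$u_S\notin V(S)\setminus\mathrm{Max}\,V(S)$''. In the scalar case this was just ``$\sum_{i\in S}u_i\ge V(S)$''; here $V(S)$ is a set, $\mathrm{Max}$ refers to Pareto-maximal elements, and one must argue carefully that weak duality for the multiobjective pair $(P(S),D(S))$ rules out the existence of $z\in\widehat T_S$ with $z\ge u_S$ and $z\ne u_S$ — i.e. that $u_S$ cannot lie in the ``interior'' part $V(S)\setminus\mathrm{Max}\,V(S)$. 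This requires pinning down precisely which multiobjective weak-duality statement (Theorem on $g(w)\le z(x)$) applies to the transformed problem $(6)$ written in the equality-constrained form $(7)$–$(8)$, including handling the inequality-to-equality conversion via slack variables, and making sure the dual feasibility of $\tfrac1\gamma\omega^\ast$ is stated in the matching form. Once that correspondence is fixed, the remaining steps are the routine componentwise manipulations already rehearsed in the single-objective proof and in the proof that $(N,V)$ is balanced.
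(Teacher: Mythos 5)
Your proposal follows essentially the same route as the paper's own proof: pass to the equality-form primal/dual pair for the transformed linear problems, use the multiobjective duality theorem at $S=N$ to place $u_N$ in $\mathrm{Max}\,V(N)$, show $\tfrac{1}{\gamma}\omega^\ast$ is dual-feasible for $D(S)$ via $b(N)-b(S)\geq 0$, and combine weak duality with $\tfrac{|S|}{n}\geq\tfrac{1}{\gamma}$ to exclude $u_S$ from $V(S)\setminus\mathrm{Max}\,V(S)$. The delicate point you flag at the end (translating dual feasibility plus dominance into the set-valued conclusion about $V(S)$ and its Pareto-maximal elements) is real, but the paper passes over it just as tersely, so your treatment is at least as careful as the original.
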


\begin{proof}
We will formulate first the multiobjective linear production programming
problems $P(S)$ for each $S\subseteq N$ equivalent to the multiobjective
fractional linear production programming problems and the dual.

Let $c^{\prime }\in \mathbb{R}^{r\times (p+1)},$ $A^{\prime }\in \mathbb{R}%
^{(r+m)\times (p+1)}$ and $b^{\prime }\in \mathbb{R}^{r+m}$ be defined as $%
c^{\prime }=(c_{1},e_{1},...,c_{r},c_{r0}),$ $A^{\prime }(S)=%
\begin{pmatrix}
(d_{i})_{i\in I} & (d_{i0})_{i\in I} \\ 
A & -I_{m}b(S)%
\end{pmatrix}%
b^{\prime }=(1_{I},0_{\mathbb{R}^{m}}),$ where $1_{I}=(1,1,...,1)\in \mathbb{%
R}^{l}$ and $0_{\mathbb{R}^{m}}=(0,0,...,0)\in \mathbb{R}^{m}.$
\end{proof}

\ \ \ \ \ \ \ \ \ \ \ \ \ \ \ \ \ \ \ \ \ \ For each $S\subsetneq N,$ the
problem $P(S)$ is\medskip

$P(S):$ \ \ \ \ \ \ \ Max $c^{\prime }(y,t)$

$\ \ \ \ \ \ \ \ \ \ \ \ \ \ \ \ (y,t)\in T_{p}=\{(y,t):A^{\prime
}(S)(y,t)=b^{\prime },$ $(y,t)\in \mathbb{R}_{+}^{p+1}\}\medskip $

and $P(N):$ \ \ \ \ \ \ \ Max $\gamma c^{\prime }(y,t)$

$\ \ \ \ \ \ \ \ \ \ \ \ \ \ \ \ (y,t)\in T_{p}=\{(y,t):A^{\prime
}(S)(y,t)=b^{\prime },$ $(y,t)\in \mathbb{R}_{+}^{p+1}\}\medskip $

Let $\widehat{T}_{S}$ , $\widehat{T}_{N}$ be the feasible areas in the
objective space of primal problems $P(S),$ resp. $P(N).$\medskip

\ \ \ \ \ \ \ \ \ \ \ \ \ \ \ \ \ \ \ \ \ For each $S\subsetneq N,$ the dual 
$D(S)$ is\medskip

$D(S):$ \ \ \ \ \ \ \ Min $\omega b^{\prime }$

$\ \ \ \ \ \ \ \ \ \ \ \ \ \ \ \ \omega \in T_{d}=\{\omega :\omega A^{\prime
}(S)u\leq c^{\prime }u$ for no $u\in \mathbb{R}_{+}^{p+1}\}\medskip $

and $D(N)$ is Min $\omega b^{\prime }$

$\ \ \ \ \ \ \ \ \ \ \ \ \ \ \ \ \omega \in T_{d}=\{\omega :\omega A^{\prime
}(N)u\leq \gamma c^{\prime }u$ for no $u\in \mathbb{R}_{+}^{p+1}\}\medskip $

Let $\omega ^{\ast }$ and $(y^{\ast },t^{\ast })$ be Pareto optimal
solutions for the problems $D(N)$ and $P(N)$.

It follows that $\omega ^{\ast }b^{\prime }=c^{\prime }(y^{\ast },t^{\ast })$
and then, $\omega ^{\ast }b^{\prime }\in $Max$\widehat{T}_{N}.$ We have that 
$\tsum_{i\in N}u_{i,\cdot }=\tsum_{i\in N}\frac{1}{n}\omega _{1,\cdot
}=(\omega _{11}^{\ast },...,\omega _{1r}^{\ast })\in $Max$V(N).$

For each $S\subsetneq N,$ $\tsum_{i\in S}u_{i,\cdot }=\tsum_{i\in N}\frac{|S|%
}{n}\omega _{1,\cdot }^{\ast }$

$\omega ^{\ast }A^{\prime }(N)u\leq \gamma c^{\prime }u$ for no $u\in 
\mathbb{R}_{+}^{p+1}$ implies that $\frac{\omega ^{\ast }}{\gamma }A^{\prime
}(S)u\leq c^{\prime }u$ for no $u\in \mathbb{R}_{+}^{p+1}.$ It follows that $%
\frac{\omega ^{\ast }b^{\prime }}{\gamma }\in V(S).$ Since $\gamma \geq n,$ $%
\frac{1}{\gamma }\leq \frac{1}{n}\leq \frac{|S|}{n}$ for each $S\subseteq N.$

$\tsum_{i\in S}u_{i,\cdot }=\tsum_{i\in N}\frac{|S|}{n}\omega _{1,\cdot
}^{\ast }\geq \frac{1}{\gamma }\omega _{1}^{\ast }.$

Then $\tsum_{i\in S}u_{i,\cdot }\notin V(S)-$Max$V(S)$

We conclude that $u=(u_{1\cdot },u_{2\cdot },...,u_{n\cdot })\in SO(N,V).$

\section{Exchange economies}

We consider a pure exchange economy $\mathcal{E}=(X_{i},e_{i},U_{i})_{i\in
N} $ with a finite number of agents, $N=\{1,2,...,n\}.$ The commodity space
is the Euclidean space $\mathbb{R}^{m}$. Each agent $i\in N$ is
characterized by her consumption set $X_{i}=\mathbb{R}^{m}$, her initial
endowment $e_{i}\in \mathbb{R}_{+}^{m}$ and her utility function $%
U_{i}:\tprod_{i\in N}X_{i}\rightarrow \mathbb{R}$. An allocation is an
element $x_{i}\in \mathbb{R}_{+}^{m}.$ An allocation $x$ is a feasible
allocation if $\tsum_{i\in N}x_{i}\leq \tsum_{i\in N}e_{i}.$

Let $p=mn.$ We will use the following notation: instead of $%
x=(x_{1},x_{2},...,x_{n})=(x_{1}^{1},x_{1}^{2},...,x_{1}^{m},x_{2}^{1},x_{2}^{2},...,x_{2}^{m},...,x_{n}^{1},x_{n}^{2},...,x_{n}^{m})\in 
\mathbb{R}_{+}^{p},$ we will use $x=(x_{1},x_{2},...,x_{m},$

\noindent $x_{1+m},...,x_{2m},...,x_{(n-1)m+1},...,x_{nm}),$ where $%
(x_{i}^{1},x_{i}^{2},...,x_{i}^{m})=(x_{(i-1)m+1},...,x_{im}).$

For each $S\subseteq N,$ we define the problem $P(S):$

max $U_{1}(x)=\frac{N_{1}(x)}{D_{1}(x)}=\frac{%
c_{11}x_{1}+c_{12}x_{2}+...+c_{m}x_{m}+c_{10}}{%
d_{11}x_{1}+d_{12}x_{2}+...+d_{1m}x_{m}+d_{10}}$

\ \ \ \ \ \ $U_{2}(x)=\frac{N_{2}(x)}{D_{2}(x)}=\frac{%
c_{21}x_{l+1}+c_{22}x_{l+2}+...+c_{2m}x_{2m}+c_{20}}{%
d_{21}x_{l+1}+d_{22}x_{l+2}+...+d_{2m}x_{m+2}+d_{20}}$

...

\ \ \ \ \ $U_{n}(x)=\frac{N_{n}(x)}{D_{n}(x)}=\frac{%
c_{n1}x_{(m-1)n+1}+c_{n2}x_{(m-1)n+2}+...+c_{nm}x_{mn}+c_{n0}}{%
d_{n1}x_{(m-1)n+1}+d_{n2}x_{(m-1)n+2}+...+d_{nm}x_{mn}+d_{n0}}$

$a_{11}(S)x_{1}+a_{12}(S)x_{2}+...+a_{1p}(S)x_{p}\leq b_{1}(S)$

$a_{21}(S)x_{1}+a_{22}(S)x_{2}+...+a_{2p}(S)x_{p}\leq b_{2}(S)$

................................................ \ \ \ \ \ \ \ \ \ \ \ \ \ \
\ \ \ \ \ \ \ \ \ \ \ \ \ \ \ \ (9)

$a_{m1}(S)x_{1}+a_{m2}(S)x_{2}+...+a_{mp}(S)x_{p}\leq b_{m}(S)$

$a_{11}(N)x_{1}+a_{12}(N)x_{2}+...+a_{1p}(N)x_{p}\leq b_{1}(N)$

$a_{21}(N)x_{1}+a_{22}(N)x_{2}+...+a_{2p}(N)x_{p}\leq b_{2}(N)$

................................................ \ \ \ \ \ \ \ \ \ \ \ \ \ \
\ \ \ \ \ \ \ \ \ \ \ \ 

$a_{m1}(N)x_{1}+a_{m2}(N)x_{2}+...+a_{mp}(N)x_{p}\leq b_{m}(N)$

$x_{1},x_{2},...,x_{p}\geq 0,\medskip $

where $%
c_{i}=(c_{i1},c_{i2},...,c_{im})^{T},d_{i}=(d_{i1},d_{i2},...,d_{im})^{T}\in 
\mathbb{R}^{m}$ for each $i\in \{1,2,...,n),$ $x\in \mathbb{R}^{p},$ $%
A=(a_{ij})_{\substack{ i=\overline{1,m}  \\ j=\overline{1,p}}}\in \mathbb{R}%
^{m\times p},$ $b(S)=(b_{1}(S),b_{2}(S),...,b_{m}(S))^{T}\in \mathbb{R}^{m},$
$D_{S}=\{x\in \mathbb{R}_{+}^{p}:A(S)x\leq b(S),$ $A(N)x\leq b(N),$ $x\geq
0\}\subseteq \mathbb{R}_{+}^{p}.\medskip $

The coefficients $a_{ij}(S)$ are defined as follows.

For $S\subset N,$ $a_{ij}(S)=\left\{ 
\begin{array}{c}
1\text{ if }j=(k-1)m+i,\text{ }i\in \{1,2,...,m-1\}\text{ and }k\in S; \\ 
0\text{ if }j\in (k-1)m+i,\text{ }i\in \{1,2,...,m-1\}\text{ and }k\notin S;
\\ 
1\text{ \ \ \ \ \ \ \ \ \ \ \ \ \ \ \ \ \ \ \ \ if \ \ \ \ \ \ \ \ \ \ \ \ \
\ \ \ \ \ \ }j=km\text{ and }k\in S; \\ 
0\text{ \ \ \ \ \ \ \ \ \ \ \ \ \ \ \ \ \ \ \ if \ \ \ \ \ \ \ \ \ \ \ \ \ \
\ \ \ \ \ \ }j=km\text{ and }k\notin S.%
\end{array}%
\right. $ for each $i\in \{1,2,...,m\}$ and $j\in \{1,2,...,p\}.\medskip $

For $S\subseteq N,$ the coefficients $b_{i}(S)=\tsum_{j\in S}e_{j}^{i}$
represents the initial endowment from the $i^{th}$ good of the coalition $S.$

The multiobjective linear fractional programming problem (9) is equivalent
with the multiobjective linear programming problem (10):\medskip

Max $\{tN_{i}(\frac{y}{t})$, $i\in N\}$

$tD_{i}(\frac{y}{t})\leq 1$ if $i\in N................(10)$

$A(S)(\frac{y}{t})-b(S)\leq 0$

$A(N)(\frac{y}{t})-b(N)\leq 0$

$t\in \mathbb{R}_{+},$ $y\in \mathbb{R}_{+}^{m},$ or, explicitely,$\medskip $

Max $c_{i}y+c_{i0}t,$ $i=1,...,m;$

$d_{i}y+d_{i0}t\leq 1,$ $i=1,...,m;$

$A(S)y-tIb(S)\leq 0....................(11)$

$A(N)y-tIb(N)\leq 0$

$t\in \mathbb{R}_{+},$ $y\in \mathbb{R}_{+}^{m}$.$\medskip $

We will attach the following cooperative game $(N,V)$ to the economy $%
\mathcal{E},$ $V:\mathcal{P}(N)\rightarrow \mathbb{R}^{n},$ $V(\emptyset
)=\{0\}$, $V(S)=($Max$\widehat{T}_{S}-\mathbb{R}_{+}^{n})\cap \mathbb{R}%
_{+}^{n}$ for each $S\subset N,$

$V(N)=($Max$\gamma \widehat{T}_{N}-\mathbb{R}_{+}^{n})\cap \mathbb{R}%
_{+}^{n} $

where $\gamma >\max (\gamma ^{\ast },n)$ and $\gamma ^{\ast }=\max_{\mathcal{%
B}}\tsum_{S\in \mathcal{B}}\gamma (S),$ $\mathcal{B}$ being any balanced
coalition of $N,$ $T_{S}=\{(t,y):(t,y)$ verifies the restrictions of the
problem $(3)\};$ $\widehat{T}_{S}=\{z\in \mathbb{R}^{m}:z=(tN_{i}(\frac{y}{t}%
))_{i\in \{1,2,...,m\}},$ $(t,y)\in T_{s}\}.$

We obtain the following results.

\begin{theorem}
The game $(N,V)$ is balanced.
\end{theorem}

\begin{proof}
Let $\mathcal{B}$ be a balanced colection of $N.$ First, we have that

$\tsum_{S\subset N}\gamma (S)b_{k}(S)=\tsum_{S\subset N}\tsum_{i\in S}\gamma
(S)b_{k}^{i}(S)=\tsum_{i\in N}\{\tsum_{S\subset N,S\backepsilon i}\gamma
(S)\}b_{k}^{i}=\tsum_{i\in N}b_{k}^{i}=b_{k}(N)$

for each $k\in \{1,2,...,m\}.$

Let $z(y(S),t(S))=(z_{1}(y(S),t(S)),z_{2}(y(S),t(S)),...,z_{l}(y(S),t(S)))%
\in V(S)$ and then,

$\tsum_{S\in \mathcal{B}}\frac{\gamma (S)}{\gamma ^{\prime }}z(y(S),t(S))=$

$=\tsum_{S\in \mathcal{B}}\frac{\gamma (S)}{\gamma ^{\prime }}%
(...,c_{i}y(S)+y(S)c_{i0},...)=$

$=(\widehat{t}N_{1}(\frac{\widehat{y}}{\widehat{t}}),\widehat{t}N_{2}(\frac{%
\widehat{y}}{\widehat{t}}),...,\widehat{t}N_{l}(\frac{\widehat{y}}{\widehat{t%
}}))=$

$=z(\widehat{y},\widehat{t}),$

where $\gamma ^{^{\prime }}=\tsum_{S\in \mathcal{B}}\gamma (S),$ $\widehat{y}%
:=\tsum_{S\in \mathcal{B}}\frac{\gamma (S)}{\gamma ^{\prime }}y(S)$ and $%
\widehat{t}=\tsum_{S\in \mathcal{B}}\frac{\gamma (S)}{\gamma ^{\prime }}%
t(S), $ $\widehat{y}\in \mathbb{R}_{+}^{p},\widehat{t}\in \mathbb{R}_{+}.$

Assume that $(y(S),t(S))\in D_{S}.$

We have that $A(S)y(S)\leq t(S)b(S)$ and $A(N)y(S)\leq t(S)b(N)$ for each $%
S\in \mathcal{B}$, $\frac{\gamma (S)}{\gamma ^{\prime }}\geq 0.$
Consequently, $\frac{\gamma (S)}{\gamma ^{\prime }}A(N)y(s)\leq t(S)\frac{%
\gamma (S)}{\gamma ^{\prime }}b(N)$. By adding, we obtain $\tsum_{S\in 
\mathcal{B}}A(N)\frac{\gamma (S)}{\gamma ^{\prime }}y(s))\leq \tsum_{S\in 
\mathcal{B}}\frac{\gamma (S)}{\gamma ^{\prime }}A(N)y(s)\leq \tsum_{S\in 
\mathcal{B}}\frac{\gamma (S)}{\gamma ^{\prime }}t(S)b(N)\leq b(N)\tsum_{S\in 
\mathcal{B}}t(S)\frac{\gamma (S)}{\gamma ^{\prime }},$ and then $%
A(N)(\tsum_{S\in \mathcal{B}}\frac{\gamma (S)}{\gamma ^{\prime }}y(s))\leq
b(N)(\tsum_{S\in \mathcal{B}}\frac{\gamma (S)}{\gamma ^{\prime }}t(S)).$
Therefore, $A(N)\widehat{y}\leq b(N)\widehat{t}$, that is $(\widehat{y},%
\widehat{t})$ verifies $A(N)y-tb(N)\leq 0$

Since $dy(S)+d_{0}t(S)\leq 1$ for each $S\in \mathcal{B}$ and $\frac{\gamma
(S)}{\gamma ^{\prime }}\geq 0,$ it follows that $d\tsum_{S\in \mathcal{B}}%
\frac{\gamma (S)}{\gamma ^{\prime }}y(S)+d_{0}\tsum_{S\in \mathcal{B}}\frac{%
\gamma (S)}{\gamma ^{\prime }}t(S)\leq 1$ and then, $d\widehat{y}+d_{0}%
\widehat{t}\leq 1.$

We notice that $\widehat{y},\widehat{t}\geq 0$ and conclude that $(\widehat{y%
},\widehat{t})\in V(N)$ and therefore $\tsum_{S\subset N}\gamma
(S)(y(S),t(S))\in V(N),$.....

...$\tsum_{S\subset N}\gamma (S)V(S)\subset V(N)$, that is the game is
balanced.
\end{proof}

We consider the dual problem to the multiobjective linear problem in order
to find a point beloging to the stable outcome $SO(N,V)=\{x\in \mathbb{R}%
_{+}^{m\times n}:x_{S}\notin V(S)\backslash $Max$V(S),$ $\forall S\subset
N\}.$

The next theorem gives an element of the Stable outcome.

\begin{theorem}
Let $\omega ^{\ast }$ be a Pareto optimal solution of the dual problem of
the associated multiobjective linear programming problem $(3)$ with $S=N.$
Then the payoff $u=(u_{1\cdot },u_{2\cdot },...,u_{n\cdot })\in \mathbb{R}%
^{n\times n},$ $u_{i\cdot }=(u_{i1},u_{i2},...,u_{in})$ defined by $u_{ik}=%
\frac{1}{n}\omega _{k1}^{\ast },$ $i=1,2,...,n$ and $k=1,2,...,n$ belongs to
the stable outcome of the game $(N,V).$
\end{theorem}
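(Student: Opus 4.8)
The plan is to mirror exactly the argument already carried out in Theorem 10 (the stable-outcome theorem for the multiobjective production game), adapting only the ambient dimensions. First I would set up the equivalent multiobjective linear programming problems $P(S)$ and their duals $D(S)$: writing $c'=(c_1,c_{10},\dots,c_n,c_{n0})$, $A'(S)=\begin{pmatrix} (d_i)_{i\in N} & (d_{i0})_{i\in N}\\ A(S) & -I_m b(S)\\ A(N) & -I_m b(N)\end{pmatrix}$, and $b'=(1_N,0_{\mathbb R^m},0_{\mathbb R^m})$, so that $P(S)$ reads $\max c'(y,t)$ subject to $A'(S)(y,t)=b'$, $(y,t)\in\mathbb R_+^{m+1}$, with $P(N)$ carrying the extra factor $\gamma$ on the objective; the dual $D(S)$ is $\min \omega b'$ over $\omega$ such that $\omega A'(S)u\leq c'u$ for no $u\in\mathbb R_+^{m+1}$. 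Here the key structural point, which must be invoked, is that for the exchange-economy model the coalitional constraint matrix $A(S)$ plus the grand-coalition constraint $A(N)x\le b(N)$ together make $D_S$ nonempty and bounded, so Theorems 7–9 on multiobjective duality apply and $D(N)$, $P(N)$ both have Pareto optimal solutions with equal objective values.

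Next I would take $\omega^{\ast}$ a Pareto optimal solution of $D(N)$ and $(y^{\ast},t^{\ast})$ a Pareto optimal solution of $P(N)$; by Theorem 9 (or the necessary-and-sufficient criterion, Theorem 10's predecessor) one may assume $\omega^{\ast}b'=c'(y^{\ast},t^{\ast})$, hence $\omega^{\ast}b'\in\mathrm{Max}\,\widehat T_N$. Defining $u_{ik}=\tfrac1n\omega^{\ast}_{k1}$ for $i,k=1,\dots,n$, one checks $\sum_{i\in N}u_{i\cdot}=(\omega^{\ast}_{11},\dots,\omega^{\ast}_{n1})=\omega^{\ast}b'\in\mathrm{Max}\,V(N)$, so the grand-coalition efficiency condition in the definition of $SO(N,V)$ holds. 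For a proper coalition $S\subsetneq N$ I would use the scaling trick from Theorem 10: since $\omega^{\ast}A'(N)u\le\gamma c'u$ holds for no $u\in\mathbb R_+^{m+1}$, dividing by $\gamma$ and using that $A'(S)$ is coordinatewise dominated by $A'(N)$ on the endowment block (because $b(S)\le b(N)$ entries-wise and the $A(N)$ rows are common to both) gives that $\tfrac{\omega^{\ast}}{\gamma}A'(S)u\le c'u$ holds for no $u\in\mathbb R_+^{m+1}$, i.e. $\tfrac{\omega^{\ast}}{\gamma}$ is feasible for $D(S)$, whence $\tfrac{1}{\gamma}\omega^{\ast}b'\in V(S)$. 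Since $\gamma\ge n$ we have $\tfrac1\gamma\le\tfrac{|S|}{n}$, so $\sum_{i\in S}u_{i\cdot}=\tfrac{|S|}{n}(\omega^{\ast}_{11},\dots,\omega^{\ast}_{n1})\ge\tfrac1\gamma\omega^{\ast}b'$, and therefore $\sum_{i\in S}u_{i\cdot}\notin V(S)\setminus\mathrm{Max}\,V(S)$. This is exactly the condition defining $SO(N,V)$, so $u\in SO(N,V)$.

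The step I expect to be the main obstacle is the passage from "$\tfrac{\omega^{\ast}}{\gamma}$ is a feasible point of $D(S)$" to "$\tfrac1\gamma\omega^{\ast}b'\in V(S)$" together with the comparison $\sum_{i\in S}u_{i\cdot}\notin V(S)\setminus\mathrm{Max}\,V(S)$: one must be careful that $V(S)$ is the set $(\mathrm{Max}\,\widehat T_S-\mathbb R_+^n)\cap\mathbb R_+^n$ of \emph{dominated-and-nonnegative} outcomes, so the claim is really that the candidate payoff is not a strictly Pareto-dominated interior point of this comprehensive set, and this requires invoking Theorem 6 (weak duality: one never has $g(w)\le z(x)$ for feasible $w,x$) to rule out $\tfrac{|S|}{n}\omega^{\ast}b'$ being strictly below some point of $\mathrm{Max}\,\widehat T_S$. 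A secondary subtlety is verifying that the extra grand-coalition constraints $A(N)x\le b(N)$ in problem $(9)$ do not break the feasibility transfer $D(N)\rightsquigarrow D(S)$; since those rows appear identically in both $A'(N)$ and $A'(S)$ and only the $b(S)\le b(N)$ block changes, the domination needed for the "for no $u$" statement goes through, but this should be stated explicitly rather than left implicit as in the production-game proof.
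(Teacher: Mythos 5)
Your proposal follows essentially the same route as the paper's own proof: the same reformulation via $c'$, $A'(S)$, $b'$, the same use of multiobjective strong duality to place $\omega^{\ast}b'$ in $\mathrm{Max}\,\widehat{T}_N$, and the same $\gamma$-scaling of the dual solution to show $\frac{1}{\gamma}\omega^{\ast}b'\in V(S)$ and conclude $\sum_{i\in S}u_{i\cdot}\notin V(S)\setminus\mathrm{Max}\,V(S)$. The two obstacles you flag (justifying the feasibility transfer from $D(N)$ to $D(S)$ in the presence of the extra $A(N)x\le b(N)$ rows, and the passage from a componentwise inequality to non-membership in $V(S)\setminus\mathrm{Max}\,V(S)$) are indeed the delicate points, and the paper leaves both of them just as implicit as you suspected.
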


\begin{proof}
We will formulate first the multiobjective linear production programming
problems $P(S)$ for each $S\subseteq N$ equivalent to the multiobjective
fractional linear production programming problems and the dual.

Let $p=nm,$ $c^{\prime }\in \mathbb{R}^{n\times (p+1)},$ $A^{\prime }\in 
\mathbb{R}^{(n+2m)\times (p+1)}$ and $b^{\prime }\in \mathbb{R}^{n+2m}$ be
defined as $c^{\prime }=(c_{1},c_{10},...,c_{r},c_{r0}),$ $A^{\prime }(S)=%
\begin{pmatrix}
(d_{i})_{i\in I} & (d_{i0})_{i\in I} \\ 
A(S) & -I_{m}b(S) \\ 
A(N) & -I_{m}b(N)%
\end{pmatrix}%
b^{\prime }=(1_{I},0_{\mathbb{R}^{m}},0_{\mathbb{R}^{m}}),$ where $%
1_{I}=(1,1,...,1)\in \mathbb{R}^{m}$ and $0_{\mathbb{R}^{m}}=(0,0,...,0)\in 
\mathbb{R}^{m}.$
\end{proof}

\ \ \ \ \ \ \ \ \ \ \ \ \ \ \ \ \ \ \ \ \ \ For each $S\subsetneq N,$ the
problem $P(S)$ is\medskip

$P(S):$ \ \ \ \ \ \ \ Max $c^{\prime }(y,t)$

$\ \ \ \ \ \ \ \ \ \ \ \ \ \ \ \ (y,t)\in T_{p}=\{(y,t):A^{\prime
}(S)(y,t)=b^{\prime },$ $(y,t)\in \mathbb{R}_{+}^{p+1}\}\medskip $

and $P(N):$ \ \ \ \ \ \ \ Max $\gamma c^{\prime }(y,t)$

$\ \ \ \ \ \ \ \ \ \ \ \ \ \ \ \ (y,t)\in T_{p}=\{(y,t):A^{\prime
}(S)(y,t)=b^{\prime },$ $(y,t)\in \mathbb{R}_{+}^{p+1}\}\medskip $

Let $\widehat{T}_{S}$ , $\widehat{T}_{N}$ be the feasible areas in the
objective space of primal problems $P(S),$ resp. $P(N).$\medskip

\ \ \ \ \ \ \ \ \ \ \ \ \ \ \ \ \ \ \ \ \ For each $S\subsetneq N,$ the dual 
$D(S)$ is\medskip

$D(S):$ \ \ \ \ \ \ \ Min $\omega b^{\prime }$

$\ \ \ \ \ \ \ \ \ \ \ \ \ \ \ \ \omega \in T_{d}=\{\omega :\omega A^{\prime
}u\leq c^{\prime }u$ for no $u\in \mathbb{R}_{+}^{p+1}\}\medskip $

and $D(N)$ is Min $\omega b^{\prime }$

$\ \ \ \ \ \ \ \ \ \ \ \ \ \ \ \ \omega \in T_{d}=\{\omega :\omega A^{\prime
}u\leq \gamma c^{\prime }u$ for no $u\in \mathbb{R}_{+}^{p+1}\}\medskip $

Let $\omega ^{\ast }$ and $(y^{\ast },t^{\ast })$ be Pareto optimal
solutions for the problems $D(N)$ and $P(N)$.

It follows that $\omega ^{\ast }b^{\prime }=c^{\prime }(y^{\ast },t^{\ast })$
and then, $\omega ^{\ast }b^{\prime }\in $Max$\widehat{T}_{N}.$ We have that 
$\tsum_{i\in N}u_{i,\cdot }=\tsum_{i\in N}\frac{1}{n}\omega _{1,\cdot
}=(\omega _{11}^{\ast },...,\omega _{1n}^{\ast })\in $Max$V(N).$

For each $S\subsetneq N,$ $\tsum_{i\in S}u_{i,\cdot }=\tsum_{i\in N}\frac{|S|%
}{n}\omega _{1,\cdot }^{\ast }$

$\omega ^{\ast }A^{\prime }(N)u\leq \gamma c^{\prime }u$ for no $u\in 
\mathbb{R}_{+}^{p+1}$ implies that $\frac{\omega ^{\ast }}{\gamma }A^{\prime
}(S)u\leq c^{\prime }u$ for no $u\in \mathbb{R}_{+}^{p+1}.$ It follows that $%
\frac{\omega ^{\ast }b^{\prime }}{\gamma }\in V(S).$ Since $\gamma \geq n,$ $%
\frac{1}{\gamma }\leq \frac{1}{n}\leq \frac{|S|}{n}$ for each $S\subseteq N.$

$\tsum_{i\in S}u_{i,\cdot }=\tsum_{i\in N}\frac{|S|}{n}\omega _{1,\cdot
}^{\ast }\geq \frac{1}{\gamma }\omega _{1}^{\ast }.$

Then $\tsum_{i\in S}u_{i,\cdot }\notin V(S)-$Max$V(S)$

We conclude that $u=(u_{1\cdot },u_{2\cdot },...,u_{n\cdot })\in SO(N,V).$

\end{document}